\newtheorem{theorem}{Theorem}
\theoremstyle{definition}
\newtheorem{example}[theorem]{Example}
\newtheoremstyle{remark}{9pt}{9pt}{}{0pt}{\bf}{.}{0.5em}{}
\theoremstyle{remark} \newtheorem{remark}{Remark}
\def\id{\operatorname{id}}
\def\aut{\operatorname{Aut}}
\def\tr{\operatorname{tr}}
\def\diag{\operatorname{diag}}
\title{Note on the group of automorphisms of the Spectral Ball}
\author{\L ukasz Kosi\'nski}
\address{Institute of Mathematics, Jagiellonian University, \L ojasiewicza 6, 30-348 Krak\'ow, Poland}
\email{{lukasz.kosinski}@gazeta.pl}
\thanks{The work is partially supported by the grant of the Polish Ministry for Science and Higher Education No. N N201 361436.}
\keywords{Spectral ball, group of automorphisms}
\subjclass[2010]{Primary 32M17; Secondary 32A07}
\begin{document}

\begin{abstract} It is shown that the conjecture on the description of the group of automorphism of the spectral ball posed by Ransford and White is false.
\end{abstract}

\maketitle

Let $\Omega$ denote the spectral ball, that is a domain composed of $n\times n$ complex matrices whose spectral radius is less then $1.$ The natural question that arises in the study of the geometry of $\Omega$ is to classify its group of automorphisms. It is seen that among them there are the following three forms:
\begin{enumerate}[(i)]
\item {\it Transposition}: $\mathcal T: x\mapsto x^t,$
\item {\it M\"obius maps}: $\mathcal M_{\alpha,\gamma}: x\mapsto \gamma(x-\alpha)(1-\overline{\alpha}x)^{-1},$ where $\alpha$ lies in the unit disc and $|\gamma|=1,$
\item {\it Conjugations:} $\mathcal J_u:x\mapsto u(x)^{-1}xu(x),$ where $u:\Omega\to M_n^{-1}$ is a conjugate invariant holomorphic map, i.e. $u(q^{-1}xq)=u(x)$ for each $x\in \Omega$ and $q\in \mathcal M_n^{-1}.$
\end{enumerate}

Ransford and White have asked in \cite{Ran-Whi} whether the compositions of the three above forms generate the whole $\aut (\Omega)$ - the group of automorphisms of the spectral ball. The problems related to this question were considered among others in \cite{Bar-Ran}, \cite{Pas}, \cite{Ros} and \cite{Zwo}.

The aim of this short note is to show that the answer to the conjecture posed by Ransford and White is negative. More precisely, we will present an example of an automorphism of the spectral ball which is not generated by mappings (i), (ii) and (iii).

To simplify the notation we focus our attention on the case $n=2$. The similar examples working for $n\geq 2$ are self-evident.

\medskip

For the convenience of the reader we start with recalling some basic properties of the mappings (i), (ii) and (iii) which will be useful in the sequel. Let $f$ be an automorphism of the spectral ball generated by (i), (ii) and (iii) and preserving the origin. Observe that $\mathcal J_u\circ \mathcal M_{\alpha,\gamma}=\mathcal M_{\alpha, \gamma}\circ \mathcal J_{u\circ \mathcal M_{\alpha,\gamma}}$ for any $u:\Omega\to \mathcal M_n^{-1}$, $|\alpha|<1$ and $|\gamma|=1.$ Therefore, the automorphism $f$ is generated only by the mappings of the form (i) and (iii). Moreover, $f'(0)$ is a linear automorphism of the spectral ball and it follows from \cite{Ran-Whi} that $f'(0).x =\gamma mxm^{-1},$ $x\in \Omega,$ or $f'(0).x=\gamma mx^{t}m^{-1},$ $x\in \Omega,$ for some unimodular constant $\gamma$ and invertible matrix $m.$ Note also that if the first possibility holds, i.e. $f'(0).x=\gamma mxm^{-1}$, then $\gamma^{-1}f$ is of the form (iii).

\begin{theorem}[Example 1]\label{ex1} Let $\varphi$ be an entire holomorphic function on $\mathbb C.$ Put $$f((x_{ij}))=\left(\begin{array}{cc} x_{11} & e^{-\varphi(x_{12}x_{21})}x_{12} \\ e^{\varphi (x_{12}x_{21})}x_{21} & x_{22} \\\end{array} \right),\quad x\in \Omega.$$ Then $f$ is an automorphism of $\Omega$ which is not generated by mappings of the form (i), (ii) and (iii), providing that $\varphi$ is non-constant.
\end{theorem}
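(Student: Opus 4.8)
The plan is to prove the two claims in turn.

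\emph{That $f$ is an automorphism.} The map $f$ is holomorphic because $\varphi$ is entire. For $x\in\Omega$ one has $\tr f(x)=x_{11}+x_{22}=\tr x$ and $\det f(x)=x_{11}x_{22}-e^{-\varphi(x_{12}x_{21})}x_{12}\,e^{\varphi(x_{12}x_{21})}x_{21}=x_{11}x_{22}-x_{12}x_{21}=\det x$, so $f(x)$ and $x$ have the same characteristic polynomial, hence the same spectral radius; thus $f(\Omega)\subset\Omega$. Moreover $f$ preserves the product of the off-diagonal entries, $f(x)_{12}f(x)_{21}=x_{12}x_{21}$, so the map given by the same formula with $\varphi$ replaced by $-\varphi$ is a holomorphic two-sided inverse of $f$. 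Hence $f\in\aut(\Omega)$.

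\emph{Reduction.} Suppose, towards a contradiction, that $f$ is generated by the maps (i), (ii) and (iii). Since $f(0)=0$, the observations preceding the theorem apply: $f$ is then generated by (i) and (iii), and $f'(0)$ is a linear automorphism of $\Omega$. A direct differentiation gives
\[
f'(0).x=\left(\begin{smallmatrix} x_{11} & e^{-\varphi(0)}x_{12}\\ e^{\varphi(0)}x_{21} & x_{22}\end{smallmatrix}\right)=mxm^{-1},\qquad m:=\diag(1,e^{\varphi(0)}),
\]
which is of the first of the two admissible forms, with $\gamma=1$. By the last remark before the theorem, $f$ itself is then of the form (iii): there is a holomorphic, conjugate-invariant $u\colon\Omega\to\mathcal M_n^{-1}$ with $u(x)^{-1}xu(x)=f(x)$ for all $x\in\Omega$, i.e. $x\,u(x)=u(x)\,f(x)$.

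\emph{Extracting a contradiction.} I would first evaluate this intertwining relation on the anti-diagonal matrices $x(s,t)=\left(\begin{smallmatrix}0 & s\\ t & 0\end{smallmatrix}\right)$, which lie in $\Omega$ exactly when $|st|<1$. Comparing entries in $x(s,t)\,u(x(s,t))=u(x(s,t))\,f(x(s,t))$ shows, for $st\neq0$, that $u(x(s,t))=\left(\begin{smallmatrix}a & b\\ (t/s)e^{\varphi(st)}b & ae^{-\varphi(st)}\end{smallmatrix}\right)$ for suitable scalars $a,b$ depending on $(s,t)$. On the other hand $x(s/\mu,\mu t)=\diag(\mu,1)^{-1}x(s,t)\diag(\mu,1)$, so by conjugate-invariance $u(x(s,t))$ depends only on $p:=st$; since then the $(2,1)$ entry $(t/s)e^{\varphi(p)}b$ must also depend only on $p$ while $t/s$ is free on $\{st=p\}$, we get $b\equiv0$, whence $u(x(s,t))=\diag(a(p),a(p)e^{-\varphi(p)})$ with $a(p)\neq0$. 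Now fix $p$ with $0<|p|<1$ and set $x_\varepsilon=\left(\begin{smallmatrix}\varepsilon & 1\\ p-\varepsilon^2 & -\varepsilon\end{smallmatrix}\right)$. Its characteristic polynomial is $\lambda^2-p$, with two distinct nonzero roots, so $x_\varepsilon$ is diagonalizable and conjugate to $x(1,p)\in\Omega$; in particular $x_\varepsilon\in\Omega$ and $u(x_\varepsilon)=\diag(a(p),a(p)e^{-\varphi(p)})$. Plugging $x_\varepsilon$ into $x\,u(x)=u(x)\,f(x)$ and comparing the $(1,2)$ entries gives $e^{-\varphi(p)}=e^{-\varphi(p-\varepsilon^2)}$, i.e. $\varphi(p)-\varphi(p-\varepsilon^2)\in 2\pi i\mathbb Z$ for every $\varepsilon\in\mathbb C$. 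Since this difference is continuous in $\varepsilon$ and vanishes at $\varepsilon=0$, it vanishes for all small $\varepsilon$; thus $\varphi$ is constant near $p$, hence constant on $\mathbb C$ by the identity theorem, contradicting the non-constancy of $\varphi$.

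The computations above (the entrywise identities, membership in $\Omega$, diagonalizability of $x_\varepsilon$) are routine. The heart of the argument is the reduction in the second step to an automorphism of the form $f=\mathcal J_u$, followed by the double use of the conjugate-invariance of $u$: first on the anti-diagonal slice, where it collapses $u$ to a diagonal matrix, and then on the family $x_\varepsilon$ lying in a single conjugacy class, along which $u$ is forced constant while the quantity $x_{12}x_{21}$ feeding $\varphi$ varies. Setting up this second step in the right order is the delicate point.
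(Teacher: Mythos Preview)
Your argument is correct. The first two parts---checking that $f$ is a spectrum-preserving bijection with holomorphic inverse, and reducing (via $f(0)=0$ and the explicit form of $f'(0)$) to the assumption $f=\mathcal J_u$ with $u$ conjugate-invariant---coincide with the paper's proof.

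The contradiction step, however, is organized differently. The paper works on the single conjugacy class $\mathfrak S=\{\det x=0,\ \tr x=1/2\}$: since all matrices there have the distinct eigenvalues $0,1/2$, they are mutually conjugate, so $u$ is a constant matrix $n$ on $\mathfrak S$ and $f|_{\mathfrak S}$ is the restriction of the \emph{linear} map $x\mapsto n^{-1}xn$. Evaluating $f$ along the curve $\lambda\mapsto\left(\begin{smallmatrix}\lambda&\lambda\\ 1/2-\lambda&1/2-\lambda\end{smallmatrix}\right)\in\mathfrak S$ produces entries like $e^{\varphi(\lambda(1/2-\lambda))}\lambda$, which cannot be affine in $\lambda$ when $\varphi$ is non-constant (essential singularity at $\infty$). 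This is very short, because it never tries to identify $u$: it only uses that a fixed conjugation is linear.

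Your route instead first pins down $u$ on the anti-diagonal slice (forcing $u$ to be diagonal there by varying $t/s$ at fixed $st$), and then transports this information to the conjugacy class $\{\tr=0,\ \det=-p\}$ via the family $x_\varepsilon$, where comparing the $(1,2)$ entries of $x_\varepsilon\,u=u\,f(x_\varepsilon)$ yields $e^{-\varphi(p)}=e^{-\varphi(p-\varepsilon^2)}$ and hence $\varphi$ constant. This is more computational but has the merit of actually exhibiting the shape of $u$ on these slices; the paper's argument is slicker because it sidesteps that computation entirely by appealing only to linearity of a fixed conjugation. Note that your preliminary step on the anti-diagonal slice is essential for your version: without knowing $u(x_\varepsilon)$ is diagonal, the entrywise comparison would not immediately isolate $\varphi$.
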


\begin{proof}It is clear that $f$ preserves the spectrum, so $f(\Omega)\subset \Omega.$ Thus, it suffices to observe that the inverse to $f$ is given by the formula: $$f^{(-1)}((x_{ij}))=\left(                                                                                           \begin{array}{cc} x_{11} & e^{\varphi(x_{12}x_{21})}x_{12} \\ e^{-\varphi(x_{12}x_{21})}x_{21} & x_{22} \\\end{array} \right),\quad x\in \Omega.$$

What remains to do is to show that $f$ is not generated by the mappings of the form (i), (ii) and (iii).

It is clear that $f'(0).x=mxm^{-1},$ $x\in \Omega,$ for some diagonal $m\in \mathcal M^{-1}_2,$ so we aim at proving that $f$ is not a conjugation $\mathcal J_v$ with $v$ depending only on the conjugacy class of $x$.

It is enough to observe that $f$ restricted to the fiber $\mathfrak S:=\{x\in \Omega:\ \det x=0,\ \tr x=1/2\}$ is not of the form $f(x)=nxn^{-1},$ $x\in \Omega$, for any invertible $n$.
To do it compute $$f\left(\left(                                                                                           \begin{array}{cc} \lambda & \lambda \\ 1/2-\lambda & 1/2-\lambda \\\end{array} \right)\right)=\left(                                                                                           \begin{array}{cc} \lambda & e^{-\varphi(\lambda(1/2-\lambda))}\lambda \\ e^{\varphi(\lambda(1/2-\lambda))}(1/2-\lambda) & 1/2-\lambda, \\\end{array} \right),$$ $\lambda\in \mathbb C,$ and use the trivial fact that for any $\alpha,\beta\in \mathbb C$ the mappings $e^{\varphi(\lambda(1/2-\lambda))}\lambda$ and $\alpha \lambda + \beta$, $\lambda\in \mathbb C$ are not identically equal, provided that $\varphi$ is non-constant (compare for example the singularity at $\infty$).
\end{proof}

\begin{remark}
One can show that the conjugation \begin{equation}\label{conj} \mathcal J(x)=\diag(a(x),a^{-1}(x)) x \diag(a(x),a^{-1}(x))^{-1},\end{equation} where $a$ is a non-vanishing holomorphic function on $\Omega$, is an automorphism of the spectral ball if and only if $a$ depends only on $x_{11},x_{22}$ and $x_{12}x_{21}.$

This remark describes more precisely the situation occurring in the Example~\ref{ex1}. Actually, the automorphism appearing in Example~\ref{ex1} is of the form (\ref{conj}) with $a$ depending only on $x_{12}x_{21}$ (more precisely, $a(x)= e^{-\varphi(x_{12}x_{21})}$).
\end{remark}

\begin{example} Put $u\left(\left(\begin{array}{cc} x_{11} & x_{12} \\ x_{21} & x_{22} \\\end{array} \right)\right):=\left(\begin{array}{cc} 1 & 0 \\ a(x_{12}) & 1 \\\end{array} \right)$, $x\in \Omega$, where $a$ is an entire holomorphic function. Then the conjugation $f(x)=u(x)xu(x)^{-1},$ $x\in \Omega,$ is an automorphism of $\Omega$ non-generated by the automorphisms of the form (i), (ii) and (iii).
\end{example}
\begin{proof} Direct computations allow us to observe that $f^{(-1)}$ is given by the formula $f^{(-1)}(x)=u^{-1}(x) x u(x),$ $x\in \Omega.$ The second part may be proved similarly as in the previous example.
\end{proof}

\begin{remark} We would like to point out that the situation occurring in the above example may be described more precisely. Namely, put $u:=\left(\begin{array}{cc}  1 & 0 \\ a & 1 \\\end{array} \right),$ where $a$ is an arbitrary holomorphic function on $\Omega.$ Then the conjugation $\mathcal J_u: x\mapsto u(x) x u(x)^{-1}$ is an automorphism of $\Omega$ if and only if $a$ depends on $x_{12},$ $\tr x$ and $\det x.$
\end{remark}

\medskip

The problem of describing the group of automorphisms of spectral ball was divided by Rostand (\cite{Ros}) into two parts. The first one is to prove that any normalized automorphism (i.e. an automorphism fixing the origin and satysfying $f'(0)=\id$) is a conjugation $\mathcal J_u$, where $u:\Omega\to \mathcal M_n^{-1}$ is a holomorphic mapping. The second one is to show that the automorphism being a conjugation is of the form $\mathcal J_u$ with $u$ satisfying the condition $u(q^{-1}x q)=u(x)$ for each $x\in \Omega$ and each invertible $q.$ Note that it was shown in this paper that the second statement is false. On the other hand it seems that the answer to the first question is affirmative. So we have two following problems:
\begin{itemize}
\item Is every normalized automorphism of the spectral ball of the form $x\to u(x)xu(x)^{-1}$, where $u:\Omega\to \mathcal M_n^{-1}$ is a holomorphic mapping?
\item Describe all holomorphic $u:\Omega\to \mathcal M_{n}^{-1}$ such that the mapping $x\mapsto u(x)xu(x)^{-1}$ is an automorphism of $\Omega$.
\end{itemize}

\end{document}